\numberwithin{equation}{section}
\theoremstyle{plain}
\newtheorem{theorem}{Theorem}[section]
\newtheorem{corollary}[theorem]{Corollary}
\newtheorem{conjecture}[theorem]{Conjecture}
\theoremstyle{definition}
\newtheorem{definition}[theorem]{Definition}
\newtheorem{example}[theorem]{Example}
\theoremstyle{remark}
\newtheorem{remark}[theorem]{Remark}
\newcommand{\A}{\mathcal{A}}
\newcommand{\Z}{\mathbb{Z}}
\newcommand{\C}{\mathbb{C}}
\newcommand{\LL}{\mathcal{L}}
\newcommand{\Aut}{\operatorname{Aut}}
\newcommand{\rank}{\operatorname{rank}}
\newcommand{\Ext}{\operatorname{Ext}}
\newcommand{\Hom}{\operatorname{Hom}}
\newcommand{\Ker}{\operatorname{Ker}}
\newcommand{\DP}{\mathcal{A_{DS}}}
\newcommand{\decID}{\mathcal{A_{ID}}}
\begin{document}

\title[Double covering]{Betti numbers and torsions in homology groups of double coverings}

\begin{abstract}
Papadima and Suciu proved an inequality between the ranks of 
the cohomology groups of the Aomoto complex with finite field coefficients 
and the twisted cohomology groups, and conjectured that 
they are actually equal for certain cases associated with the Milnor fiber 
of the arrangement. Recently, 
an arrangement (the icosidodecahedral arrangement) with the following 
two peculiar properties was found: (i) the strict version of 
Papadima-Suciu's inequality holds, and 
(ii) the first integral homology of the Milnor fiber has a non-trivial $2$-torsion. 
In this paper, we investigate the 
relationship between these two properties for double covering spaces. 
We prove that (i) and (ii) are actually equivalent. 
\end{abstract}

\author{Suguru Ishibashi}
\address{Suguru Ishibashi, ARISE analytics}
\email{sgr.ishibashi@gmail.com}

\author{Sakumi Sugawara}
\address{Sakumi Sugawara, Hokkaido University}
\email{sugawara.sakumi.f5@elms.hokudai.ac.jp}

\author{Masahiko Yoshinaga}
\address{Masahiko Yoshinaga, Osaka University}
\email{yoshinaga@math.sci.osaka-u.ac.jp}


\subjclass[2010]{Primary 52C35, Secondary 20F55}
\keywords{Hyperplane arrangements, Milnor fiber, double covering, 
Icosidodecahedron}

\date{\today}
\maketitle

\tableofcontents

\section{Introduction}

\label{sec:intro}

Double coverings of CW-complexes are 
well-studied subject in topology. We first recall several classical 
constructions related to double coverings \cite{hau}. Let $X$ be a 
connected CW-complex. Let $p:Y\longrightarrow X$ be an unbranched 
double covering with connected $Y$. Fix a base point $x_0\in X$. 
By definition, the fiber $p^{-1}(x_0)$ consists of two points. Let 
$\gamma:[0, 1]\longrightarrow X$ be a closed path with 
$\gamma(0)=\gamma(1)=x_0$. The local triviality of the covering 
enables us to construct parallel transport of the fiber along $\gamma$ and 
a bijection $p^{-1}(x_0)\longrightarrow p^{-1}(x_0)$. 
Because of the homotopy lifting property, this bijection depends only on 
the homotopy type of $\gamma$. Hence, the group homomorphism 
$\pi_1(X, x_0)\longrightarrow\Aut(p^{-1}(x_0))$ is well-defined, 
and is called the characteristic map, or the monodromy, 
of the double covering. 
Since $\Aut(p^{-1}(x_0))\simeq\{\pm 1\}\simeq \Z_2$, the double covering 
assigns an element of $\Z_2$ to each closed path. Thus an element 
of $H^1(X, \Z_2)$ is attached to a double covering, which is called 
the characteristic class of the double covering $p:Y\longrightarrow X$. 
Conversely, any nonzero element $\omega\in H^1(X, \Z_2)$ determines 
a double covering $p:X^\omega\longrightarrow X$. 

Although the double covering $p:X^\omega\longrightarrow X$ is 
determined by $\omega\in H^1(X, \Z_2)$, the topology of $X^\omega$ is 
not so simple. 
In fact, double coverings have attracted a lot of attention recently 
in the topological study of hyperplane arrangements 
\cite{liu-liu, suc-boc, sug, y-dou}. 


A hyperplane arrangement is a finite collection of hyperplanes in 
a linear (affine, or projective) space. 
The relationship between the topological and combinatorial structures 
of arrangements has been much studied. 
\cite{ds-multinets, dim, fal-yuz, gue-sos, gue-lat, lib, ot, PS-modular, suc-fundam, suc-mil, suc-boc, sug, y-mil, y-dou}. 

Hyperplane arrangements are also important 
as hypersurfaces with non-isolated singularities. 
The topology of the Milnor fiber of a central hyperplane arrangement is 
one of the central topics in the theory of arrangements 
\cite{cds, cs-mil, cs-char, ds-multinets, suc-mil, y-mil}. 
However, even the first Betti number of the Milnor fiber of hyperplane 
arrangements have not yet been understood well. 

The basic strategy to study the Milnor fiber $F_\A$ of a 
(central) hyperplane arrangement $\A$ is to use the monodromy 
action and eigenspace decomposition 
\begin{equation}
\label{eq:eigen}
H^k(F, \C)\simeq\bigoplus H^k(F, \C)_\lambda, 
\end{equation}
where $\lambda$ runs complex 
numbers satisfying $\lambda^{|\A|}=1$ and 
$H^k(F, \C)_\lambda$ is 
the $\lambda$-eigenspace of the monodromy action. 

Papadima and Suciu studied the relationship between the monodromy 
eigenspace and the so-called Aomoto complex with finite field 
coefficients \cite{ps-sp, PS-modular}. They proved an inequality 
between the dimension $\dim_\C H^k(F, \C)_\lambda$ and the 
cohomology of the Aomoto complex (see (\ref{psineq}) below). 
Furthermore, they conjectured that the equality holds, which enables us 
to formulate a combinatorial 
procedure computing $\dim_\C H^k(F, \C)_\lambda$ purely combinatorial 
way using Aomoto complex. 

However, recently, \cite{y-dou} exhibits an example 
of an arrangement of $16$ planes in $\C^3$ 
(the icosidodecahedral arrangement $\decID$, see Example \ref{ex:id} 
for details) 
for which 
\begin{itemize}
\item[(i)] Papadima-Suciu's conjectural equality breaks and 
the strict inequality holds. 
\item[(ii)] the integral homology of the Milnor fiber $H_1(F, \Z)$ has 
a non-trivial $2$-torsion. 
\end{itemize}
(Examples of torsion in the first homology of Milnor fibers of arrangements with multiplicities 
\cite{cds} and for higher degree homology groups \cite{ds-multinets} have 
been known.) 

In \cite{y-dou}, the study of double coverings of 
the complement to affine hyperplane arrangements plays a crucial role. 
The key idea is to use the transfer long exact sequence for double coverings 
and $\Z_2$ cohomology groups \cite{hatcher, hau} 
in connection with the Aomoto complex. Recently \cite{suc-boc} 
applies the idea to more general setting and problems. 

In this paper, we will focus on the two peculiar properties (i) and (ii) that 
the icosidodecahedral arrangement possesses. They seem to be 
independent. However, the main result of  this paper shows that they 
are actually equivalent for double coverings. 

In the next \S \ref{sec:main}, we will introduce several invariants related to 
double coverings of CW-complexes, and present the main theorem. 
Then in \S \ref{sec:ex}, we recall the icosidedocahedral arrangement. 
We will also exhibit a simpler example of $10$ lines for which the integral 
homology of a double covering has a $2$-torsion.

\section{Main result}

\label{sec:main}

Let $X$ be a connected CW-complex. 
Let $\omega\in H^1(X, \Z_2)$. For simplicity, we assume $\omega\neq 0$. 
Since 
$H^1(X, \Z_2)\simeq\Hom(H_1(X, \Z), \Z_2)\simeq \Hom(\pi_1(X), \Z_2)$, 
$\omega$ determines a homomorphism $\pi_1(X)\to\Z_2\simeq \{\pm1\}$. 
Since $\omega\neq 0$, 
the group homomorphism $\pi_1(X)\to\{\pm1\}$ is surjective. Then the 
$\Ker(\pi_1(X)\to\{\pm1\})$ is a subgroup of $\pi_1(X)$ of index $2$, 
which determines 
the associated double covering $p_\omega:X^\omega\to X$. 
The group homomorphism $\pi_1(X)\to\{\pm1\}=\Z^\times$ also 
induces a local system of rank one over $\Z$ which we denote 
by $\LL_\omega$. 

\begin{definition} 
(The rank of $\omega$-twisted local system homology group.) 
Denote the rank of the local system homology group with coefficients in 
$\LL_\omega$ by 
\begin{equation}
\rho_k(\omega):=\rank_\Z H_k(X, \LL_\omega)=\dim_\C 
H_k(X, \LL_\omega\otimes_\Z\C), 
\end{equation}
\end{definition}

From now on, we assume that the element 
$\omega\in H^1(X, \Z_2)$ satisfies $\omega\wedge\omega=0$. Then, the multiplication 
map 
\begin{equation}
\omega\wedge:H^\bullet(X, \Z_2)\to H^{\bullet+1}(X, \Z_2)
\end{equation}
induces a cochain complex which we call the mod $2$ 
Aomoto complex $(H^\bullet(X, \Z_2), \omega\wedge)$. 

\begin{definition} 
(The rank of the cohomology of the mod $2$ Aomoto complex.) 
Denote the rank of the cohomology of the Aomoto complex by 
\begin{equation}
\alpha_k(\omega):=\rank_{\Z_2}H^k(H^\bullet(X, \Z_2), \omega\wedge). 
\end{equation}
\end{definition}
These invariants are related to integral and mod $2$ Betti numbers of 
the double covering $X^\omega$. By Leray spectral sequence, the complex 
cohomology group of $X^\omega$ decomposes into a direct sum as 
$H^k(X^\omega, \C)\simeq H^k(X, \C)\oplus H^k(X, \LL_\omega\otimes_\Z\C)$. 
Therefore, we have 
\begin{equation}
\label{leray}
b_k(X^\omega)=b_k(X)+\rho_k(\omega). 
\end{equation}

In \cite{ps-sp}, Papadima and Suciu proved the inequality 
\begin{equation}
\label{psineq}
\rho_k(\omega)\leq\alpha_k(\omega). 
\end{equation}
Therefore, the $k$-th Betti number of the double covering 
$X^\omega$ is bounded by the sum 
\begin{equation}
b_k(X^\omega)\leq b_k(X)+\alpha_k(\omega). 
\end{equation}
In \cite{PS-modular}, they conjectured the equality 
``$\rho_k(\omega)=\alpha_k(\omega)$'' holds for $X=M(\A)$ the 
complement to a complex hyperplane arrangement and the covering 
$F\longrightarrow M(\A)$ which is the Milnor fiber of the cone of $\A$. 

Recently, a counterexample to the equality was found. 
Let $\decID=\{H_1, \dots, H_{15}\}$ be as in Figure \ref{fig:ID} 
(see Example \ref{ex:id} for details). Let 
$e_1, \dots, e_{15}\in H^1(M(\decID), \Z_2)$ be the dual basis 
to the basis of $H_1(M(\decID), \Z_2)$ determined by the meridians 
of each line. Consider $\omega=e_1+e_2+\cdots + e_{15}$. Then, 
$\rho_1(\omega)=0$ and $\alpha_1(\omega)=1$ (\cite{y-dou}). 
Thus we have the strict inequality 
\begin{equation}
\label{strict}
\rho_1(\omega)<\alpha_1(\omega). 
\end{equation}
In \cite{y-dou} it was also proved that $H_1(M(\decID)^\omega, \Z)$ 
has a $2$-torsion. 

\begin{figure}[htbp]
\centering
\begin{tikzpicture}[scale=1.2]

\draw[thick, red] (0.809,0)++(0,-3) -- +(90:6) node[above, red]{$8$};
\draw[thick, blue] (-0.309,0)++(0,-3) -- +(90:6) node[above, blue]{$13$};
\draw[thick, red] (0.118,0)++(0,-3) -- +(90:6) node[above, red]{$3$};

\draw[thick, red] (72:0.809)++(162:3)  node[left, red]{$10$}-- +(342:6);
\draw[thick, blue] (252:0.309)++(162:3)  node[left, blue]{$15$}-- +(342:6);
\draw[thick, red] (72:0.118)++(162:3)  node[left, red]{$5$}-- +(342:6);

\draw[thick, red] (144:0.809)++(234:3) -- +(54:6) node[right, red]{$7$};
\draw[thick, blue] (322:0.309)++(234:3) -- +(54:6) node[right, blue]{$12$};
\draw[thick, red] (144:0.118)++(234:3) -- +(54:6) node[right, red]{$2$};

\draw[thick, red] (216:0.809)++(126:3)  node[above, red]{$9$}-- +(306:6);
\draw[thick, blue] (36:0.309)++(126:3)  node[above, blue]{$14$}-- +(306:6);
\draw[thick, red] (216:0.118)++(126:3)  node[above, red]{$4$}-- +(306:6);

\draw[thick, red] (288:0.809)++(198:3) -- +(18:6) node[right, red]{$6$};
\draw[thick, blue] (108:0.309)++(198:3) -- +(18:6) node[right, blue]{$11$};
\draw[thick, red] (288:0.118)++(198:3) -- +(18:6) node[right, red]{$1$};

\end{tikzpicture}
\caption{(A deconing of the icosidodecahedral arrangement) 
$\decID=\{H_1, \dots, H_{15}\}$} 
\label{fig:ID}
\end{figure}

The main result of this paper is a refinement of Papadima-Suciu's 
inequality (\ref{psineq}). 
Actually, the gap between 
$\rho_k(\omega)$ and $\alpha_k(\omega)$ can be precisely measured 
by $2$-torsions. To state the main result, we need the following. 

\begin{definition}
Let 
\[
H_k(X^\omega, \Z)[2]:=\{\alpha\in H_k(X^\omega, \Z)\mid 2\alpha=0\}
\]
be the $2$-torsion part of the $k$-th homology group of $X^\omega$. 
Note that the abelian group 
$H_k(X^\omega, \Z)[2]$ can be considered as a vector space 
over $\Z_2$. We denote its rank by 
\begin{equation}
\tau_k(X^\omega):=\rank_{\Z_2}H_k(X^\omega, \Z)[2]. 
\end{equation}
\end{definition}
Note that $\tau_k(X^\omega)\neq 0$ if and only if $H_k(X^\omega, \Z)$ 
has a non-trivial $2$-torsion element. More precisely, 
$\tau_k(X^\omega)$ is the number of even order summands 
when we express the torsion part of $H_k(X^\omega, \Z)$ as a direct 
sum of finite cyclic groups.  

\begin{theorem}
\label{thm:main}
Let $X$ be a connected CW-complex and $\omega\in H^1(X, \Z_2)$ with 
$\omega\wedge\omega=0$. Then, 
\begin{equation}
\label{main}
\alpha_k(\omega)=\rho_k(\omega)+\tau_k(X^\omega)+\tau_{k-1}(X^\omega).
\end{equation}
In particular, the equality $\alpha_k(\omega)=\rho_k(\omega)$ holds if and only if 
$H_k(X^\omega, \Z)$ and 
$H_{k-1}(X^\omega, \Z)$ do not have non-trivial $2$-torsion elements. 
\end{theorem}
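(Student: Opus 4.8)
The plan is to derive (\ref{main}) by comparing three computations of the Betti-type numbers of $X^\omega$: with $\Z_2$-coefficients, with rational coefficients, and integrally. \emph{First}, I would set up the transfer long exact sequence of the double covering $p_\omega:X^\omega\to X$ with $\Z_2$-coefficients. At the level of cellular chains, let $\sigma$ be the deck involution of $X^\omega$; then the operator $\sigma+1$ on $C_\bullet(X^\omega,\Z_2)$ satisfies $(\sigma+1)^2=0$, multiplication by $\sigma+1$ identifies $C_\bullet(X,\Z_2)$ with the subcomplex $(\sigma+1)C_\bullet(X^\omega,\Z_2)$, and the quotient $C_\bullet(X^\omega,\Z_2)/(\sigma+1)C_\bullet(X^\omega,\Z_2)$ is again naturally $C_\bullet(X,\Z_2)$. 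This produces a short exact sequence of complexes
\[
0\longrightarrow C_\bullet(X,\Z_2)\longrightarrow C_\bullet(X^\omega,\Z_2)\longrightarrow C_\bullet(X,\Z_2)\longrightarrow 0,
\]
and hence, by dualizing, a long exact sequence
\[
\cdots\to H^{k-1}(X,\Z_2)\xrightarrow{\ \omega\wedge\ }H^{k}(X,\Z_2)\xrightarrow{\ p_\omega^*\ }H^{k}(X^\omega,\Z_2)\to H^{k}(X,\Z_2)\xrightarrow{\ \omega\wedge\ }H^{k+1}(X,\Z_2)\to\cdots .
\]
The essential point is that the connecting homomorphism of this sequence is multiplication by the characteristic class $\omega\in H^1(X,\Z_2)$ of the covering --- it is the map induced by the extension class of the $\Z_2[\pi_1(X)]$-module sequence $0\to\Z_2\to\Z_2[\Z/2\Z]\to\Z_2\to0$ out of which the sequence of complexes above is built. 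The hypothesis $\omega\wedge\omega=0$ enters precisely here: it is exactly the condition making $(H^\bullet(X,\Z_2),\omega\wedge)$ a cochain complex, so that $\alpha_k(\omega)$ is defined and compatible with this long exact sequence. A count of $\Z_2$-dimensions along the sequence, splitting each $H^k(X,\Z_2)$ according to the kernel and the image of $\omega\wedge$, then gives
\[
\dim_{\Z_2}H^{k}(X^\omega,\Z_2)=\dim_{\Z_2}H^{k}(X,\Z_2)+\alpha_k(\omega).
\]

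\emph{Second}, I would convert these mod $2$ dimensions into integral data. For any space $Y$ with finitely generated homology the universal coefficient theorem gives $\dim_{\Z_2}H^{k}(Y,\Z_2)=b_k(Y)+\tau_k(Y)+\tau_{k-1}(Y)$, where $b_k(Y)=\rank_\Z H_k(Y,\Z)$ and $\tau_k(Y)=\rank_{\Z_2}H_k(Y,\Z)[2]$. Applying this to $Y=X^\omega$ and to $Y=X$, substituting the identity just obtained, and combining with the rational Leray decomposition $b_k(X^\omega)=b_k(X)+\rho_k(\omega)$ of (\ref{leray}), the terms referring to $X$ alone cancel and one is left with exactly (\ref{main}):
\[
\alpha_k(\omega)=\rho_k(\omega)+\tau_k(X^\omega)+\tau_{k-1}(X^\omega).
\]

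\emph{Finally}, since $\rho_k(\omega)$, $\tau_k(X^\omega)$ and $\tau_{k-1}(X^\omega)$ are nonnegative integers, (\ref{main}) shows at once that $\alpha_k(\omega)=\rho_k(\omega)$ if and only if $\tau_k(X^\omega)=\tau_{k-1}(X^\omega)=0$, i.e.\ if and only if neither $H_k(X^\omega,\Z)$ nor $H_{k-1}(X^\omega,\Z)$ has a non-trivial $2$-torsion element. The step I expect to require the most care is the first one: verifying that the transfer sequence has exactly the stated shape and, above all, that its connecting homomorphism is cup product with $\omega$, so that the mod $2$ Aomoto complex is genuinely the mechanism governing the mod $2$ cohomology of $X^\omega$. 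Once this identification and the accompanying rank count are in place, the remaining steps are formal consequences of the universal coefficient theorem and (\ref{leray}).
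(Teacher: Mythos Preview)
Your approach is essentially the same as the paper's: compute $\dim_{\Z_2}H^k(X^\omega,\Z_2)$ in two ways---once via the transfer long exact sequence (which the paper invokes by citing \cite[Theorem~3.7]{y-dou}), and once via the universal coefficient theorem together with (\ref{leray})---and equate the results. Your derivation of the transfer identity $\dim_{\Z_2}H^{k}(X^\omega,\Z_2)=\dim_{\Z_2}H^{k}(X,\Z_2)+\alpha_k(\omega)$ is correct and is precisely what that cited result encodes.

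There is, however, a gap in your second step. When you apply the universal coefficient formula to both $X^\omega$ and $X$ and substitute, what you actually obtain is
\[
\alpha_k(\omega)=\rho_k(\omega)+\bigl(\tau_k(X^\omega)-\tau_k(X)\bigr)+\bigl(\tau_{k-1}(X^\omega)-\tau_{k-1}(X)\bigr),
\]
not (\ref{main}): the contributions $\tau_k(X)$ and $\tau_{k-1}(X)$ coming from $2$-torsion in $H_*(X,\Z)$ do \emph{not} cancel against anything. For a check, take $X=\RP^2\times S^1$ with $\omega$ the class pulled back from the $S^1$ factor: then $\omega\wedge\omega=0$, $X^\omega\cong X$, $\rho_1(\omega)=0$, $\tau_1(X^\omega)=1$, yet $\alpha_1(\omega)=0$, so (\ref{main}) fails while the displayed formula above holds. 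The paper's argument sidesteps this by quoting from \cite{y-dou} the identity $\dim_{\Z_2}H^k(X^\omega,\Z_2)=b_k(X)+\alpha_k(\omega)$ with the \emph{integral} Betti number $b_k(X)$ already in place; that version presupposes $\tau_k(X)=\tau_{k-1}(X)=0$, which holds for complements of hyperplane arrangements (the intended application) but not for an arbitrary connected CW-complex. Your argument becomes correct as soon as you add the hypothesis that $H_*(X,\Z)$ has no $2$-torsion.
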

\begin{proof}
We compute the rank of the mod $2$ cohomology group 
$\rank_{\Z_2}H^k(X^\omega, \Z_2)$ in two ways. First we apply 
the Universal coefficient theorem for cohomology 
(see e.g. \cite[Theorem 3.2]{hatcher}). 
We have 
\begin{equation}
H^k(X^\omega, \Z_2)\simeq\Hom(H_k(X^\omega, \Z), \Z_2)\oplus
\Ext(H_{k-1}(X^\omega, \Z), \Z_2). 
\end{equation}
Using the equality \ref{leray} and definitions, 
it is easily seen that the $\Z_2$-rank of the right-hand side is 
equal to $b_k(X)+\rho_k(\omega)+\tau_k(X^\omega)+\tau_{k-1}(X^\omega)$. 
Secondly, using the formula \cite[Theorem 3.7]{y-dou}, we obtain the following. 
\begin{equation}
\rank_{\Z_2}H^k(X^\omega, \Z_2)=b_k(X)+\alpha_k(\omega). 
\end{equation}
Thus we have the formula (\ref{main}). 
\end{proof}

As a special case of $k=1$, we have the following. 
\begin{corollary}
\label{cor:main}
\begin{equation}
\label{cor}
\alpha_1(\omega)=\rho_1(\omega)+\tau_1(X^\omega). 
\end{equation}
Thus the strict inequality $\alpha_1(\omega)>\rho_1(\omega)$ holds 
if and only if $H_1(X^\omega, \Z)$ has non-trivial $2$-torsion elements. 
\end{corollary}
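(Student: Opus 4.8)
The plan is to read off Corollary \ref{cor:main} from Theorem \ref{thm:main} by specializing to $k=1$; the only point requiring attention is the vanishing of $\tau_0(X^\omega)$. First I would substitute $k=1$ into the main identity (\ref{main}) to get
\[
\alpha_1(\omega)=\rho_1(\omega)+\tau_1(X^\omega)+\tau_0(X^\omega),
\]
so that (\ref{cor}) amounts to the claim $\tau_0(X^\omega)=0$.

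To see this, I would note that $X^\omega$ is connected: it is the covering associated with $\Ker(\pi_1(X)\to\{\pm1\})$, which, since $\omega\neq0$ (the standing assumption of \S\ref{sec:main}), is a subgroup of index exactly $2$, hence of finite index, so the corresponding covering space is connected. Therefore $H_0(X^\omega,\Z)\cong\Z$ is torsion-free, and $\tau_0(X^\omega)=\rank_{\Z_2}H_0(X^\omega,\Z)[2]=0$. Plugging this back in gives (\ref{cor}).

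Finally, for the equivalence, I would use that $\tau_1(X^\omega)$ is a non-negative integer which is positive exactly when $H_1(X^\omega,\Z)$ has a non-trivial $2$-torsion element (the note following the definition of $\tau_k$). Together with (\ref{cor}) and Papadima--Suciu's inequality (\ref{psineq}), which ensures $\rho_1(\omega)\leq\alpha_1(\omega)$ in all cases, this shows that the strict inequality $\alpha_1(\omega)>\rho_1(\omega)$ holds if and only if $H_1(X^\omega,\Z)$ has a non-trivial $2$-torsion element. The only genuine ingredient is the connectedness of $X^\omega$ used to kill $\tau_0$; everything else is bookkeeping, and I do not foresee any real obstacle.
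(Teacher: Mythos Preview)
Your proposal is correct and follows essentially the same route as the paper: specialize Theorem~\ref{thm:main} to $k=1$ and use that $H_0(X^\omega,\Z)$ is torsion-free to get $\tau_0(X^\omega)=0$. Your argument is just a bit more detailed (and note that connectedness is not even needed, since $H_0$ of any space is free abelian; also, invoking (\ref{psineq}) is redundant once (\ref{cor}) is established, as $\tau_1(X^\omega)\ge 0$ already gives $\alpha_1(\omega)\ge\rho_1(\omega)$).
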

\begin{proof}
Since $H_0(X^\omega, \Z)$ is torsion free, we have $\tau_0(X^\omega)=0$. 
\end{proof}

Combining (\ref{leray}) and 
Theorem \ref{thm:main}, we have the following. 
\begin{corollary}
If $\omega\wedge\omega=0$, the Betti number of the double cover is 
\begin{equation}
b_k(X^\omega)=b_k(X)+\alpha_k(\omega)-
\tau_k(X^\omega)-\tau_{k-1}(X^\omega). 
\end{equation}
In particular, 
$b_1(X^\omega)=b_1(X)+\alpha_1(\omega)-\tau_1(X^\omega)$. 
\end{corollary}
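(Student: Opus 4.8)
The plan is to derive this corollary purely as a substitution, since both ingredients are already available: equation (\ref{leray}), which expresses the Betti number of the double cover as $b_k(X^\omega)=b_k(X)+\rho_k(\omega)$, and the main identity (\ref{main}) of Theorem \ref{thm:main}, which relates the Aomoto rank $\alpha_k(\omega)$ to the twisted rank $\rho_k(\omega)$ via the two torsion ranks. The only work is to eliminate the intermediate quantity $\rho_k(\omega)$ between these two formulas.

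First I would solve (\ref{main}) for $\rho_k(\omega)$, obtaining
\begin{equation}
\rho_k(\omega)=\alpha_k(\omega)-\tau_k(X^\omega)-\tau_{k-1}(X^\omega).
\end{equation}
Then I would substitute this expression for $\rho_k(\omega)$ directly into (\ref{leray}), which yields
\begin{equation}
b_k(X^\omega)=b_k(X)+\alpha_k(\omega)-\tau_k(X^\omega)-\tau_{k-1}(X^\omega),
\end{equation}
the claimed general formula. No spectral sequence or universal coefficient argument needs to be reopened here, since all of that is encapsulated in the two results being combined.

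For the special case $k=1$, I would proceed identically but invoke Corollary \ref{cor:main} in place of the full Theorem, which gives $\alpha_1(\omega)=\rho_1(\omega)+\tau_1(X^\omega)$ because $H_0(X^\omega,\Z)$ is torsion free and hence $\tau_0(X^\omega)=0$. Solving for $\rho_1(\omega)$ and substituting into (\ref{leray}) then produces $b_1(X^\omega)=b_1(X)+\alpha_1(\omega)-\tau_1(X^\omega)$.

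There is no genuine obstacle in this argument; it is a mechanical consequence of the two displayed formulas, and the only point requiring any attention is the vanishing $\tau_0(X^\omega)=0$ used to simplify the degree-one statement, which follows immediately from the torsion freeness of zeroth homology.
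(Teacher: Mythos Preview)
Your argument is correct and matches the paper's approach exactly: the paper simply states that the corollary follows by combining (\ref{leray}) with Theorem~\ref{thm:main}, which is precisely the substitution you carry out. The handling of the $k=1$ case via $\tau_0(X^\omega)=0$ is likewise the same as in Corollary~\ref{cor:main}.
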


\begin{remark}
\label{rem:liu}
Let us consider the eigenspace decomposition (\ref{eq:eigen}) 
of the Milnor fiber $F_\A$ for 
a central arrangement $\A$ with $|\A|$ even. 
In this case, the $(-1)$-eigenspace 
$H_k(F_\A, \C)_{-1}$ appears as a direct summand of $H^k(F_\A, \C)$. 
Recall that $H_k(F_\A, \C)_{-1}$ is isomorphic to the local system cohomology 
group $H^k(M(\A), \LL_\omega\otimes\C)$ with $\omega=\sum_i e_i$. 
Therefore, 
\begin{equation}
\dim H_k(F_\A, \C)_{-1}=\rho_k(\omega)=
\alpha_k(\omega)-\tau_k(X^\omega)-\tau_{k-1}(X^\omega), 
\end{equation}
where $X=M(d\A)$ is the complement of the deconing $d\A$ 
(or equivalently, projectivized complement $X=M(\A)/\C^\times$). 
In particular, 
$\dim H_1(F_\A, \C)_{-1}=\alpha_1(\omega)-\tau_1(X^\omega)$. 
Thus, to compute the dimension of the $(-1)$-eigenspace 
$\dim H_k(F_\A, \C)_{-1}$, 
the rank of $2$-torsion part $\tau_k(X^\omega)$ 
of the homology of the double covering $H_k (X^\omega, \Z)$ 
is unavoidable. A combinatorial description of 
$\dim H_k(F_\A, \C)_{-1}$ must involve a combinatorial description of 
$\tau_k(X^\omega)$. 
\end{remark}

\section{Examples related to the icosidodecahedral arrangement}
\label{sec:ex}

\begin{example} 
\label{ex:id}
((The deconing of the) icosidodecahedral arrangement \cite{y-dou}) 
Let $\decID=\{H_1, \dots, H_{15}\}$ be the arrangement of affine 
$15$ lines as in Figure \ref{fig:ID}. 
The first homology $H_1(M(\decID), \Z)$ of the complement 
$M(\decID)=\C^2\smallsetminus\bigcup_{i=1}^{15} H_i\otimes\C$ is 
generated by meridians of each $H_i$. Let $e_1, \dots , e_{15}
\in H^1(M(\decID), \Z_2)$ be the dual basis to meridians. 

(1) Let $\omega=e_1+e_2+\dots+e_{15}$ be the sum of all $e_i$'s. 
Let $\eta=e_{11}+\cdots+e_{15}$. Then $\omega\wedge\eta=0$. 
We can show that the cohomology of the Aomoto complex 
$H^1(H^\bullet(M(\decID), \Z_2), \omega\wedge)$ is rank one 
generated by $\eta$. Thus we have 
$\alpha_1(\omega)=1$. We also have $\rho_1(\omega)=0$ (see 
\cite[Theorem 4.3]{y-dou} for details). Hence 
$\tau_1(X^\omega)=1$ and $H_1(X^\omega, \Z)$ has a non-trivial 
$2$-torsion element. 

(2) There are many other choices of $\omega$ such that 
$\alpha_1(\omega)=1$ and $\rho_1(\omega)=0$. For example, 
$\omega=e_5+e_6+e_9+e_{10}+e_{12}+e_{13}+e_{15}$. Hence, there 
exist many double coverings of $M(\decID)^\omega$ of 
$M(\decID)$ with torsion in $H_1(M(\decID)^\omega, \Z)$. 
As far as the 
authors checked by computer, for every 
$\omega$ with $\alpha_1(\omega)=1$ and $\rho_1(\omega)=0$, 
the first homology of the double covering and 
the associated local system homology group are 
\begin{equation}
\begin{split}
H_1(M(\decID)^\omega, \Z)&\simeq\Z^{\oplus 15}\oplus \Z_2, \\
H_1(M(\decID), \LL_{\omega})&\simeq\Z_2^{\oplus 13}\oplus \Z_4. 
\end{split}
\end{equation}

(3) Another interesting element is $\omega=e_{11}+e_{12}+e_{13}+e_{14}+e_{15}$. 
Then the rank of the cohomology of the Aomoto complex is $\alpha_1(\omega)=6$. 
The rank of the local system cohomology is also 
$\rho_1(\omega)=6$. Hence $\tau_1(M(\decID)^\omega)=0$.  
\end{example}

The above example shows that several double coverings of 
$M(\decID)$ have non-trivial $2$-torsions in the integral homology groups 
of double coverings. The above example $\decID$ is not the 
smallest example with non-trivial $2$-torsions. 
One of subarrangements of $\decID$ also have $2$-torsions as follows. 

\begin{example} 
\label{ex:ds}
(Double star arrangement) 
Let $\DP$ be the subarrangement of $\decID$ consisting of 
$10$ lines $\{H_6, \dots, H_{10}, H_{11}, \dots, H_{15}\}$ 
(Figure \ref{fig:DP}). 

\begin{figure}[htbp]
\centering
\begin{tikzpicture}[scale=1.2]

\draw[thick, red] (0.809,0)++(0,-3) -- +(90:6) node[above, red]{$8$};
\draw[thick, blue] (-0.309,0)++(0,-3) -- +(90:6) node[above, blue]{$13$};

\draw[thick, red] (72:0.809)++(162:3)  node[left, red]{$10$}-- +(342:6);
\draw[thick, blue] (252:0.309)++(162:3)  node[left, blue]{$15$}-- +(342:6);

\draw[thick, red] (144:0.809)++(234:3) -- +(54:6) node[right, red]{$7$};
\draw[thick, blue] (322:0.309)++(234:3) -- +(54:6) node[right, blue]{$12$};

\draw[thick, red] (216:0.809)++(126:3)  node[above, red]{$9$}-- +(306:6);
\draw[thick, blue] (36:0.309)++(126:3)  node[above, blue]{$14$}-- +(306:6);

\draw[thick, red] (288:0.809)++(198:3) -- +(18:6) node[right, red]{$6$};
\draw[thick, blue] (108:0.309)++(198:3) -- +(18:6) node[right, blue]{$11$};

\end{tikzpicture}
\caption{(The double star arrangement) 
$\DP=\{H_6, \cdots, H_{15}\}$} 
\label{fig:DP}
\end{figure}

Let $\omega=e_6+e_7+\dots+e_{15}$ be the sum of all $e_i$'s. 
As Example \ref{ex:id}, $\alpha_1(\omega)=1$ and $\rho_1(\omega)=0$. 
Hence $\tau_1(X^\omega)=1$ and $H_1(X^\omega, \Z)$ has a non-trivial 
$2$-torsion element. As far as the authors checked by computer, 
it is the unique $\omega$ with $\tau_1(X^\omega)=1$. 
The first homology of the double covering and 
the associated local system homology group are 
\begin{equation}
\begin{split}
H_1(M(\DP)^\omega, \Z)&\simeq
\Z^{\oplus 10}\oplus \Z_2,\\
H_1(M(\DP), \LL_\omega)&\simeq
\Z_2^{\oplus 8}\oplus \Z_4. 
\end{split}
\end{equation}
\end{example}

The computations in Example \ref{ex:id} and Example \ref{ex:ds} 
suggest that $2$-torsions are closely related to the local system 
(co)homology groups with rank one local systems over $\Z$, which 
have been recently studied in \cite{sug} 
for complexified real arrangements. 
We pose the following conjecture. 

\begin{conjecture}
Let $\A$ be a complex hyperplane arrangement. 
Let $\omega\in H^1(M(\A), \Z_2)$. The integral homology 
$H_1(M(\A)^\omega, \Z)$ has a non-trivial $2$-torsion if and only if 
the local system homology $H_1(M(\A), \LL_\omega)$ has a $4$-torsion 
($\Z_4$-summand). 
\end{conjecture}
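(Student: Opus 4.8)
The plan is to deduce the conjecture (the case $k=1$, which is all that is claimed) from Corollary~\ref{cor:main} together with a Bockstein-theoretic refinement of the relation between the mod~$2$ Aomoto complex and the integral twisted homology. Write $X=M(\A)$, fix a finite CW-structure, and let $C_\bullet=C_\bullet(X,\LL_\omega)$ be the cellular chain complex with coefficients in $\LL_\omega$; it is a complex of finitely generated free $\Z$-modules with $H_k(C_\bullet)=H_k(X,\LL_\omega)$. Two elementary remarks drive everything. Since the monodromy of $\LL_\omega$ takes values in $\{\pm 1\}$ and $-1\equiv 1\pmod 2$, the reduction $C_\bullet\otimes\Z_2$ is identified with the ordinary mod~$2$ chain complex $C_\bullet(X,\Z_2)$, so $H_\bullet(C_\bullet\otimes\Z_2)=H_\bullet(X,\Z_2)$; while $C_\bullet\otimes\Z_4=C_\bullet(X,(\Z_4)_\omega)$ is the chain complex of the genuinely twisted rank-one $\Z_4$-local system $(\Z_4)_\omega=\LL_\omega\otimes\Z_4$, because $-1\not\equiv 1\pmod 4$. (For an arrangement complement the hypothesis $\omega\wedge\omega=0$ is automatic, since the mod~$2$ Orlik--Solomon algebra is commutative and generated in degree one, so $\omega^2=0$.)

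The first step is to identify the initial differential $d_1$ of the mod~$2$ Bockstein spectral sequence of $C_\bullet$, i.e.\ the connecting homomorphism of
\[
0\longrightarrow C_\bullet(X,\Z_2)\longrightarrow C_\bullet(X,(\Z_4)_\omega)\longrightarrow C_\bullet(X,\Z_2)\longrightarrow 0 .
\]
This is a chain-level computation: lift a mod~$2$ cycle $\bar z$ to an integral chain $z$; the twisted boundary $\partial^{\LL_\omega}z$ differs from the ordinary boundary $\partial z$ only along the incidences whose $\omega$-monodromy is $-1$, where the difference is $-2$ times the incidence number; dividing by $2$ and reducing mod~$2$ identifies $d_1[\bar z]$ with $\beta^X_1[\bar z]+\omega\cap\bar z$, where $\beta^X_1$ is the ordinary mod~$2$ Bockstein of $X$ and $\omega\cap(-)$ is the cap product with $\omega\in H^1(X,\Z_2)$. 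By the Orlik--Solomon theorem $H_\bullet(X,\Z)$ is torsion free, so $\beta^X_1=0$, and hence $d_1=\omega\cap(-)\colon H_k(X,\Z_2)\to H_{k-1}(X,\Z_2)$. Consequently the $E_2$-page is the homology of the complex $\bigl(H_\bullet(X,\Z_2),\omega\cap(-)\bigr)$, which is $\Z_2$-dual to the mod~$2$ Aomoto complex $\bigl(H^\bullet(X,\Z_2),\omega\wedge(-)\bigr)$; therefore $\dim_{\Z_2}E_2^{k}=\alpha_k(\omega)$ for all $k$.

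Next I would read off the torsion. The spectral sequence converges to $\bigl(H_\bullet(X,\LL_\omega)/\text{torsion}\bigr)\otimes\Z_2$, so $\dim_{\Z_2}E_\infty^{k}=\rho_k(\omega)$, and the standard accounting of the higher Bockstein differentials gives
\[
\dim_{\Z_2}E_2^{k}-\dim_{\Z_2}E_\infty^{k}=\nu_k+\nu_{k-1},\qquad
\nu_j:=\#\{\text{cyclic summands of }H_j(X,\LL_\omega)\text{ whose order is divisible by }4\}.
\]
When $\omega\neq 0$ one has $H_0(X,\LL_\omega)=\Z_2$ (the coinvariants of a surjective $\pm1$-action), so $\nu_0=0$, and the case $k=1$ yields
\[
\alpha_1(\omega)-\rho_1(\omega)=\nu_1 .
\]
By Corollary~\ref{cor:main} the left-hand side equals $\tau_1(X^\omega)=\rank_{\Z_2}H_1(X^\omega,\Z)[2]$. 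Hence $H_1(X^\omega,\Z)$ has a non-trivial $2$-torsion element if and only if $\nu_1\ge 1$, i.e.\ if and only if $H_1(X,\LL_\omega)$ has an element of order $4$ --- which is the conjecture. (If $\omega=0$ both sides fail trivially for arrangements.)

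The step I expect to be the main obstacle is the identification $d_1=\omega\cap(-)$: one must fix compatible cellular models for $X$ and for the double covering, make the twisting cocycle explicit, and verify carefully that the chain-level formula above reproduces the cap product and not a neighbouring operation. This is also where the hypothesis ``$\A$ is an arrangement'' enters, through $\beta^X_1=0$: for a general CW-complex the $E_2$-page is the homology of $\bigl(H_\bullet(X,\Z_2),\beta^X_1+\omega\cap(-)\bigr)$, whose dimension need not equal the Aomoto number $\alpha_k(\omega)$, and then the equivalence can break. A softer, secondary point is the wording ``$\Z_4$-summand'': the argument gives equivalence with ``$H_1(X,\LL_\omega)$ has an element of order $4$'', and to obtain a direct summand of order \emph{exactly} $4$, as in every computed example, one would additionally need to bound the higher differentials $d_r$ ($r\ge 3$), equivalently the $2$-power torsion of $H_1(X,\LL_\omega)$.
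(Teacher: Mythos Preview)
The paper does not prove this statement: it is posed as an open \emph{conjecture}, motivated by the computations in Examples~\ref{ex:id} and~\ref{ex:ds}. There is therefore no proof in the paper to compare your proposal against; what you have written goes strictly beyond what the paper establishes.

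That said, your Bockstein spectral sequence argument is sound and, in the reading ``$H_1(M(\A),\LL_\omega)$ contains an element of order $4$'', it does appear to settle the conjecture. The step you single out as the main obstacle --- identifying the first differential of the Bockstein spectral sequence of $C_\bullet(X,\LL_\omega)$ with $\beta^X_1+\omega\cap(-)$ --- is in fact a known formula: its cohomological dual $d_1=Sq^1+\omega\cup(-)$ is precisely the twisted Bockstein analysed in \cite{suc-boc}, and your chain-level description recovers it correctly (the $S^1$ case already forces the cap-product term). With that in hand, your chain of equalities $\dim_{\Z_2}E_2^{1}=\alpha_1(\omega)$, $\dim_{\Z_2}E_\infty^{1}=\rho_1(\omega)$, and $\dim_{\Z_2}E_2^{1}-\dim_{\Z_2}E_\infty^{1}=\nu_1+\nu_0=\nu_1$ combines with Corollary~\ref{cor:main} to give the quantitative identity $\tau_1(X^\omega)=\nu_1$, stronger than the conjectured equivalence. (Feeding the same computation into Theorem~\ref{thm:main} and inducting on $k$ from $\tau_0=\nu_0=0$ even yields $\tau_k(X^\omega)=\nu_k$ for all $k$.)

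Your closing caveat is exactly right and is the one genuine residual gap relative to the paper's literal wording: your argument shows equivalence with ``some cyclic $2$-summand of order at least $4$'', not with ``a summand isomorphic to $\Z_4$''. Excluding $\Z_{2^r}$ summands with $r\geq 3$ would require control of the differentials $d_r$ for $r\geq 3$, which the Bockstein method alone does not supply; the examples in the paper are consistent with this stronger statement but do not follow from your proof.
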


\emph{Acknowledgements}. 
We thank Philip Tosteson for his inspiring comments 
during the conference 
Hyperplane Arrangements and Singularities (December 2019, Tokyo). 
We also thank Ye Liu for comments (Remark \ref{rem:liu}) 
on the previous version of the paper. 
This work was partially supported by JSPS KAKENHI 
Grant Numbers JP18H01115, JP23H00081 and 22J20470.


\begin{thebibliography}{999}

\bibitem{cds}
D. C. Cohen, G. Denham, A. Suciu, 
Torsion in Milnor fiber homology. 
\emph{Algebr. Geom. Topol.} \textbf{3} (2003), 511-535. 

\bibitem{cs-mil}
D. C. Cohen, A. Suciu, 
On Milnor fibrations of arrangements. 
\emph{J. London Math. Soc.} 
\textbf{51} (1995), no. 2, 105-119. 

\bibitem{cs-char}
D. C. Cohen, A. Suciu, 
Characteristic varieties of arrangements. 
\emph{Math. Proc. Cambridge Philos. Soc.} 
\textbf{127} (1999), no. 1, 33-53.

\bibitem{ds-multinets}
G. Denham, A. Suciu, 
Multinets, parallel connections, and Milnor fibrations of arrangements. 
\emph{Proceedings of the London Mathematical Society} 
\textbf{108} (2014) no. 6, 1435--1470.  

\bibitem{dim}
A. Dimca, 
Monodromy of triple point line arrangements, 
\emph{Singularities in Geometry and Topology 2011}, 
71-80 (2015), Mathematical Society of Japan, Tokyo 

\bibitem{fal-yuz}
M. Falk, S. Yuzvinsky, 
Multinets, resonance varieties, and pencils of plane curves. 
\emph{Compos. Math.} \textbf{143} (2007), no. 4, 1069--1088.

\bibitem{gue-sos}
B. Guerville-Ball\'e, J. Viu-Sos, 
Configurations of points and topology of real line arrangements. 
\emph{Math. Ann.} \textbf{374} (2019), 1--35. 

\bibitem{gue-lat}
B. Guerville-Ball\'e, 
Topology and homotopy of lattice isomorphic arrangements. 
\emph{Proc. Amer. Math. Soc.} \textbf{148} (2020), 2193--2200

\bibitem{hatcher}
A. Hatcher, 
Algebraic topology. Cambridge University Press, Cambridge, 2002. 

\bibitem{hau}
J. -C. Hausmann, 
Mod two homology and cohomology. 
Universitext. Springer, Cham, 2014. x+535 pp.

\bibitem{lib}
A. Libgober, On combinatorial invariance of the cohomology of 
the Milnor fiber of arrangements and the Catalan equation over 
function fields. 
\emph{Arrangements of hyperplanes-Sapporo 2009}, 175-187, 
Adv. Stud. Pure Math., 62, Math. Soc. Japan, Tokyo, 2012. ·

\bibitem{ot} 
P. Orlik, H. Terao, 
Arrangements of hyperplanes. Grundlehren der Mathematischen Wissenschaften, 300. Springer-Verlag, Berlin, 1992. xviii+325 pp.

\bibitem{ps-sp}
S. Papadima, A. Suciu, 
The spectral sequence of an equivariant chain complex and homology 
with local coefficients. 
\emph{Trans. A. M. S.}, \textbf{362} (2010), no. 5, 2685-2721. 

\bibitem{PS-modular}  
S. Papadima, A. Suciu, 
The Milnor fibration of a hyperplane arrangement: from modular resonance to algebraic monodromy, 
\emph{Proc. London Math. Soc.} \textbf{114} (2017), no. 6, 961--1004.

\bibitem{suc-fundam}
A. Suciu, 
Fundamental groups of line arrangements: enumerative aspects. 
{\em Advances in algebraic geometry motivated by physics 
(Lowell, MA, 2000)}, 43-79, 
Contemp. Math., 276, Amer. Math. Soc., Providence, RI, 2001. 

\bibitem{liu-liu}
Y. Liu, Y. Liu, 
Integral homology groups of double coverings and 
rank one $\Z$-local systems for a minimal CW complex. 
\emph{Proc. Amer. Math. Soc.} \textbf{151} (2023), 5007--5012

\bibitem{suc-mil}
A. Suciu, 
Hyperplane arrangements and Milnor fibrations, 
\emph{Ann. Fac. Sci. Toulouse Math. (6)} \textbf{23} (2014), no. 2, 417-481.

\bibitem{suc-boc}
A. Suciu, 
Cohomology, Bocksteins, and resonance varieties in characteristic $2$. 
Crooks, Peter (ed.) et al., Compactifications, configurations, and cohomology. Conference, Northeastern University, Boston, Massachusetts, USA, October 22–24, 2021. Providence, RI: American Mathematical Society (AMS). Contemp. Math. 790, 131-157 (2023).

\bibitem{sug}
S. Sugawara, 
$\mathbb{Z}$-local system cohomology of hyperplane arrangements 
and a Cohen-Dimca-Orlik type theorem. 
\emph{International Journal of Mathematics}, \textbf{34} (2023), no. 8, 2350044. 

\bibitem{y-mil}  
M. Yoshinaga, 
Milnor fibers of real line arrangements. 
\emph{Journal of Singularities}, \textbf{7} (2013), 220-237.

\bibitem{y-dou}
M. Yoshinaga, 
Double coverings of arrangement complements and 2-torsion in Milnor fiber 
homology. 
\emph{Eur. J. Math.} \textbf{6} (2020), no. 3, 1097-1109.

\end{thebibliography}
\end{document}